\theoremstyle{plain}
\newtheorem{theorem}{Theorem}
\numberwithin{equation}{section}
\newcommand{\ra}{\rightarrow}
\newcommand{\E}{\mathbb{E}}
\newcommand{\F}{\mathcal{F}}
\begin{document}

\title {`Twisted Duality' in the ${\bf C}^*$  Clifford Algebra}

\date{}

\author[P.L. Robinson]{P.L. Robinson}

\address{Department of Mathematics \\ University of Florida \\ Gainesville FL 32611  USA }

\email[]{paulr@ufl.edu}

\subjclass{} \keywords{}

\begin{abstract}

Let $V$ be a real inner product space and $C[V]$ its ${\rm C}^*$ Clifford algebra. We prove that if $Z$ is a subspace of $V$ then  $C[Z^{\perp}]$ coincides with the supercommutant of $C[Z]$ in $C[V]$. 

\end{abstract}

\maketitle

\bigbreak

\section{Introduction}

Let $V$ be a real vector space on which $( \cdot | \cdot )$ is an inner product. Denote by $C(V)$ the associated complex Clifford algebra: thus, $C(V)$ is a unital associative complex algebra containing and generated by its real subspace $V$ subject to the Clifford relations 
\[ (\forall v \in V) \; v^2 = (v | v) {\bf 1}
\]
and it admits a unique involution $^*$ for which each element of $V$ is selfadjoint. This involutive algebra carries a natural norm $|| \cdot ||$ that satisfies the ${\rm C}^*$ property; the ${\rm C}^*$-algebra obtained upon its completion is the ${\rm C}^*$ Clifford algebra $C[V]$. 

\medbreak 

For our purposes, it is important to note that these Clifford algebras are naturally $\mathbb{Z}_2$-graded: they are {\it superalgebras}. Explicitly, the complex algebra $C(V)$  carries a unique automorphism $\gamma$ that restricts to $V \subseteq C(V)$ as $- {\rm Id}$; accordingly, $C(V)$ has an eigendecomposition
\[ C(V) = C(V)_+ \oplus C(V)_-
\] 
in which 
\[  C(V)_+ = \ker (\gamma - {\rm Id})
\]
is the even Clifford algebra and 
\[ C(V)_- = \ker (\gamma + {\rm Id})
\]
is the odd subspace. The grading automorphism $\gamma$ extends continuously to the ${\rm C}^*$ Clifford algebra $C[V]$ and thereby yields a corresponding eigendecomposition 
\[ C[V] = C[V]_+ \oplus C[V]_-. 
\]

We remark that the closure of $V$ in $C[V]$ is a copy of its Hilbert space completion, for which also $C[V]$ serves as the ${\rm C}^*$ Clifford algebra; accordingly, we may and shall assume that $V$ is a real Hilbert space. Likewise, we may and shall take subspaces of $V$ that appear below to be closed. 

\medbreak 

Thus, let $Z$ be a (closed) subspace of $V$ and $Z^{\perp}$ its orthocomplement, so that 
\[ V = Z \oplus Z^{\perp}
\]
is an orthogonal direct sum decomposition of the real Hilbert space $V$. Regarding $C[V]$ as a superalgebra, the {\it supercommutant} $C[Z]'$ of $C[Z]$ within $C[V]$ is 
\[ C[Z]' = C[Z]'_+ \oplus C[Z]'_-
\] 
with even part 
\[  C[Z]'_+ = \{ a \in C[V]_+ | (\forall b \in C[Z] ) \; b a = a b\}
\]
and odd part 
\[ C[Z]'_-  = \{ a \in C[V]_- |  (\forall b \in C[Z] ) \; b a =a \gamma(b)\}. 
\]
Thus: {\it even} elements of $C[Z]'$ commute with each element of $C[Z]$; {\it odd} elements of $C[Z]'$ commute with even elements of $C[Z]$ and anticommute with odd elements of $C[Z]$.  As elements of $V$ itself are odd, it follows that 
\[ C[Z]' = \{ a \in C[V] | (\forall z \in Z) \; za = \gamma(a) z \}. 
\]

Our purpose in this note is to prove the following version of `twisted duality'.  

\begin{theorem} 
$C[Z]' = C[Z^{\perp}]$. 
\end{theorem}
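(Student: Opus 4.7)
The inclusion $C[Z^{\perp}] \subseteq C[Z]'$ is the easy half. For $z \in Z$ and $w \in Z^{\perp}$ orthogonality gives $zw + wz = 2(z|w)\mathbf{1} = 0$, so $zw = -wz = \gamma(w)z$ since $w$ is odd. Thus $Z^{\perp} \subseteq C[Z]'$, and since $C[Z]'$ is a sub-superalgebra that is closed in $C[V]$ (multiplication and $\gamma$ being continuous), it contains the entire C*-subalgebra $C[Z^{\perp}]$ generated by $Z^{\perp}$.

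For the reverse inclusion $C[Z]' \subseteq C[Z^{\perp}]$, my plan is to build a contractive linear projection $\bar E \colon C[V] \to C[Z^{\perp}]$ that fixes every element of $C[Z]'$. The key reformulation: for a unit vector $e \in V$, $ea = \gamma(a)e$ is equivalent to $eae = \gamma(a)$; the inner automorphism $\sigma_e(a) := eae$ acts on $V$ by $v \mapsto -v + 2(v|e)e$, so $\gamma \circ \sigma_e$ coincides with the $*$-automorphism $R_e$ of $C[V]$ induced by the orthogonal reflection $r_e$ of $V$ across $e^{\perp}$. Hence $a \in C[Z]'$ iff $R_e(a) = a$ for every unit $e \in Z$, and by $\mathbb{R}$-linearity of supercommutation in $z$ this is equivalent to $R_{e_i}(a) = a$ for the vectors $\{e_i\}_{i \in I_Z}$ of a chosen orthonormal basis of $Z$.

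Extend $\{e_i\}_{i \in I_Z}$ to an orthonormal basis of $V$ by adjoining an orthonormal basis of $Z^{\perp}$, and let $\{e_S\}$ (indexed by finite subsets $S$) be the induced basis of the dense $*$-subalgebra $C(V) \subseteq C[V]$. A short calculation gives $R_{e_i}(e_S) = -e_S$ if $i \in S$ and $R_{e_i}(e_S) = e_S$ otherwise, and the $R_{e_i}$ are pairwise commuting involutive $*$-automorphisms. For each finite $J \subseteq I_Z$ the averaging
\[
E_J := \prod_{i \in J} \tfrac{1}{2}(\mathrm{id} + R_{e_i})
\]
is a linear projection of $C[V]$ which annihilates $e_S$ when $S \cap J \neq \emptyset$ and fixes it otherwise, contractive in the C*-norm because each factor is a convex combination of two isometries. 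For $a \in C(V)$ the value $E_J(a)$ stabilizes once $J$ contains the finitely many $I_Z$-indices occurring in $a$; call the stable value $E(a) \in C(Z^{\perp})$. Since $E$ locally equals a contractive $E_J$, it is itself contractive and extends to $\bar E \colon C[V] \to C[Z^{\perp}]$.

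If $a \in C[Z]'$ then $R_{e_i}(a) = a$ for every $i \in I_Z$, whence $E_J(a) = a$ for every finite $J \subseteq I_Z$. Picking $a_n \in C(V)$ with $a_n \to a$ and $J_n$ finite so that $\bar E(a_n) = E_{J_n}(a_n)$, contractivity of $\bar E$ and of $E_{J_n}$ gives
\[
\|\bar E(a) - a\| \leq \|\bar E(a - a_n)\| + \|E_{J_n}(a_n - a)\| \leq 2\|a - a_n\| \to 0,
\]
and so $a = \bar E(a) \in C[Z^{\perp}]$. The step I expect to merit the most care is the C*-norm contractivity of the $E_J$: it is what upgrades $R_{e_i}$-invariance from the $L^2$-level (given by the canonical trace on $C[V]$) to the C*-level, and thereby confines $a$ to $C[Z^{\perp}]$ itself rather than merely to its larger $L^2$-closure.
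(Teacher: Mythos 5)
Your proof is correct and follows essentially the same route as the paper: your averaging operator $\tfrac12(\mathrm{id}+R_{e_i})$ is exactly the paper's projector $E_u(c)=\tfrac12(c+u\gamma(c)u)$ (since $R_e(c)=\gamma(ece)=e\gamma(c)e$), and the finite products over an orthonormal basis of $Z$, contractivity as averages of isometries, extension by continuity, and the $2\|a-a_n\|$ approximation argument all mirror the paper's Theorems 2 and 3. The only cosmetic difference is that you fix an adapted orthonormal basis and work with the dense subalgebra spanned by the $e_S$ (which, when $V$ is infinite-dimensional, is a proper dense subalgebra of the full algebraic $C(V)$ used in the paper), whereas the paper works basis-freely with arbitrary finite-dimensional subspaces and their projections onto $Z$; this changes nothing essential.
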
 

For the origins and development of `twisted duality' see [2] [6] [3] [1]; several proofs of `twisted duality' for the plain complex Clifford algebra in line with the present account are presented in [5]; and [4] is a convenient reference for the theory of Clifford algebras. 

\section{Conditional expectations and twisted duality}

Our approach to twisted duality will be via conditional expectations: we shall construct a conditional expectation 
\[ \E_Z : C[V] \ra C[Z^{\perp}]
\]
that fixes the subalgebra $C[Z]'$ pointwise; this establishes the inclusion $C[Z]' \subseteq C[Z^{\perp}]$ while the reverse inclusion $C[Z^{\perp}] \subseteq C[Z]'$ is immediate from the (linearized) Clifford relations. 

\medbreak 

We shall find it convenient to denote by $\F (V)$ the set of all finite-dimensional subspaces of $V$ directed by inclusion and to observe that 
\[ C(V) = \bigcup_{M \in \F (V)} C(M). 
\]

\medbreak 

To begin, let $u \in V$ be a unit vector and $u^{\perp} \subseteq V$ its orthogonal space. The Clifford algebra of $V$ decomposes naturally as 
\[ C(V) = C(u^{\perp}) \oplus u \: C(u^{\perp}). 
\]
To see this, let  $c \in C(V)$ and choose $M \in \F (V)$ containing $u$ so that $c \in C(M)$; augment $u$ to an orthonormal basis for $M$ and expand $c$ relative to this basis. Suppose that in this decomposition,
\[ c = a + u b
\] 
with $a, b \in C(u^{\perp})$. Then $\gamma(a) u = ua$ and $\gamma(b) u = ub$ on account of the (linearized) Clifford relations, so that 
\[ u \gamma(c) u = u \gamma(a) u + u \gamma(u b) u =  a - u b
\]
because the unit vector $u$ has square ${\bf 1}$ and is odd. It follows that the projector 
\[ E_u : C(V) \ra  C(u^{\perp}) \subseteq C(V)
\]
of $C(V)$ on $C(u^{\perp})$ along $u C(u^{\perp})$ is given by 
\[ (\forall c \in C(V)) \; E_u (c) = \frac{1}{2} (c + u \gamma(c) u). 
\]

Let $M \in \F (V)$ be a finite-dimensional subspace of $V$. If $\{u_1, \dots , u_m \}$ is an orthonormal basis for $M$ then the projectors $E_{u_1} , \dots , E_{u_m}$ commute, as is shown by direct calculation using the Clifford relations; the product
\[ E_M = E_{u_1} \circ \dots \circ E_{u_m}
\]
projects $C(V)$ on $C(u_{u_1}^{\perp}) \cap \cdots \cap C(u_{u_m}^{\perp}) = C(M^{\perp})$. 

\medbreak 

Let $Z$ be an arbitrary (closed) subspace of $V$. Let $c \in C(V)$ and choose $M \in \F (V)$ so that $c \in C(M)$. Let $X \in \F (Z)$ be the orthogonal projection of $M$ on $Z$ and $Y \in \F (Z^{\perp})$ its orthogonal projection on $Z^{\perp}$; thus $M \subseteq X \oplus Y$. Apply $E_X$ to $c \in C(X \oplus Y)$ to obtain 
\[ E_X (c) \in C((X \oplus Y) \cap X^{\perp}) = C(Y) \subseteq C(Z^{\perp});
\]
 if $u \in Z$ is any unit vector then $u E_X (c) u = E_X (c)$ by the Clifford relations, so 
\[ E_u \circ E_X (c) = E_X (c). 
\]
Now, if $N \in \F (Z)$ is any finite-dimensional subspace of $Z$ containing $X$ then taking the product as $u$ runs over an orthonormal basis for $N \cap X^{\perp}$ reveals that 
\[ E_N (c) = E_X (c).
\]
This proves that the net $(E_N (c) | N \in \F (Z))$ is eventually constant and hence converges. 

\medbreak 

Thus, we construct a linear map 
\[E_Z : C(V) \ra C(Z^{\perp}) \subseteq C(V)
\]
 by the rule 
\[ (\forall c \in C(V)) \; E_Z (c) = \lim_{N \uparrow \F (Z)} E_N (c). 
\]

\begin{theorem} 
If $Z$ is a (closed) subspace of $V$ then $E_Z$ is contractive: 
\[ (\forall c \in C(V)) \; \| E_Z (c) \| \leqslant \| c \|
\]
and has the (conditional expectation) property: 
\[(\forall c \in C(V))(\forall \ell, r \in C(Z)') \;  E_Z (\ell c r) = \ell\: E_Z (c) \: r. 
\]
\end{theorem}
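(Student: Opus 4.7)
The plan is to prove both assertions by reducing everything to the single-vector projector $E_u$ and then propagating the conclusion through the composition $E_N = E_{u_1} \circ \cdots \circ E_{u_n}$ and the eventually-constant limit that defines $E_Z$.

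I will start with contractivity. The crucial observation is that, for any unit vector $u \in V$, the element $u$ is a selfadjoint unitary of $C[V]$ (since $u^* = u$ and $u^2 = \mathbf{1}$), so conjugation $c \mapsto u c u$ is an inner $*$-automorphism of $C[V]$ and hence isometric. Composing with the isometric $*$-automorphism $\gamma$, the map $c \mapsto u \gamma(c) u$ is also isometric, so $E_u$ is the average of this isometry with the identity and is therefore contractive. Each $E_M$ is then a composition of contractions, hence contractive, and since $E_Z(c)$ is by construction eventually equal to $E_N(c)$ along $\F(Z)$, the bound $\| E_Z(c) \| \leqslant \| c \|$ is immediate.

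Next, for the conditional expectation property, the decisive step is the single-vector calculation for $u \in Z$. For $\ell \in C(Z)'$, the supercommutation relation $u \ell = \gamma(\ell) u$ combined with $u^2 = \mathbf{1}$ rearranges to the symmetric identity $u \gamma(\ell) u = \ell$, and the same holds for $r$. Inserting $u^2 = \mathbf{1}$ twice into $u\gamma(\ell c r)u = u\gamma(\ell)\gamma(c)\gamma(r)u$ then gives
\[
u \gamma(\ell c r) u = (u \gamma(\ell) u)(u \gamma(c) u)(u \gamma(r) u) = \ell \, (u \gamma(c) u) \, r,
\]
so $E_u(\ell c r) = \ell \, E_u(c) \, r$. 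Thus each $E_u$ with $u \in Z$ is a $C(Z)'$-bimodule map; compositions of bimodule maps are again bimodule maps, so each $E_N$ with $N \in \F(Z)$ is one, and choosing $N$ large enough to stabilize both $E_N(c) = E_Z(c)$ and $E_N(\ell c r) = E_Z(\ell c r)$ transfers the identity to $E_Z$.

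The only real obstacle is the bimodule computation in the middle paragraph: one must recognize that the supercommutation relation $u \ell = \gamma(\ell) u$ can be restated in the symmetric form $u \gamma(\ell) u = \ell$, which is exactly what is needed to strip the flanking $u$'s from the conjugation term and recover $\ell$ and $r$ outside. Once this is in hand, both contractivity and the conditional expectation property follow from the same pattern of single-vector calculation, inductive composition over an orthonormal basis of $N \in \F(Z)$, and passage to the eventual limit.
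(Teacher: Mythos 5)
Your proposal is correct and follows essentially the same route as the paper: a single-vector computation showing $E_u$ is contractive (averaging the identity with the isometry $c \mapsto u\gamma(c)u$) and is a $C(Z)'$-bimodule map (via the supercommutation relation $u\ell = \gamma(\ell)u$, which the paper writes in the equivalent form $u\gamma(\ell) = \ell u$), followed by composition over an orthonormal basis of $N \in \F(Z)$ and passage to the eventually constant limit defining $E_Z$. The only cosmetic difference is that you justify contractivity of $c \mapsto u\gamma(c)u$ by noting that $u$ is a selfadjoint unitary so conjugation is a $*$-automorphism, whereas the paper simply invokes $\|u\| = 1$ and submultiplicativity; both are fine.
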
 

\begin{proof} 
The norm here is the natural one, completion of $C(V)$ relative to which yields $C[V]$. If $u \in V$ is a unit vector then $\| v \| = 1$ so that if $c \in C(V)$ then $\| E_u (c) \| \leqslant \| c \|$ because $\gamma$ is isometric. Taking the product over an orthonormal basis shows that $E_N$ is contractive when $N \in \F (V)$. That $E_Z$ itself is contractive follows immediately. If $u \in Z$ is a unit vector then  $u \gamma(\ell) = \ell u$ and $\gamma(r) u = u r$ so that $u \gamma(\ell c r) u = \ell u \gamma(c) u r$ and therefore $E_u (\ell c r) = \ell E_u (c) r$. Taking the product over an orthonormal basis proves the conditional expectation property for $E_N$ when $N$ is a finite-dimensional subspace of $Z$. The conditional expectation property for $E_Z$ itself follows upon taking $M \in \F (V)$ so large that $c, \ell, r$ lie in  $C(M)$ and taking $N \in \F (Z)$ to contain the orthogonal projection of $M$ on $Z$.  
\end{proof}

\medbreak 

We remark that the purely algebraic conditional expectation $E_Z : C(V) \ra C(Z^{\perp})$ fixes the supercommutant $C(Z)' \subset C(V)$ pointwise: if $a \in C(Z)'$ then $E_Z (a) = a$ as follows at once from Theorem 2 by taking (say) $\ell = a$ and $c = r = {\bf 1}$. In particular, this implies that $C(Z)' \subseteq C(Z^{\perp})$; the Clifford relations yield the reverse $C(Z^{\perp}) \subseteq C(Z)'$. In this way, we establish Theorem 1 for the plain complex Clifford algebra C(V). 

\medbreak 

As the linear map $E_Z : C(V) \ra C(Z^{\perp}) \subseteq C(V) \subseteq C[V]$ is a contraction, it extends continuously to a contraction
\[ \E_Z : C[V] \ra C[Z^{\perp}].
\]
Note that if $N \in \F (V)$ has orthonormal basis $\{ u_1, \dots , u_n \}$ then the obvious factorization 
\[ \E_N = \E_{u_1} \circ \cdots \circ \E_{u_n}
\]
holds by continuous extension of its counterpart on the plain complex Clifford algebra, where if $u \in V$ is a unit vector then 
\[\E_u : C[V] \ra C[V] : c \mapsto \frac{1}{2}(c + u \gamma(c) u). 
\]

\begin{theorem} 
If $Z$ is a (closed) subspace of $V$ and $c \in C[V]$ then the net 
\[ (\E_N (c) | N \in \F (Z) )
\]
converges to $\E_Z (c)$: 
\[ \E_Z (c) = \lim_{N \uparrow \F (Z)} \E_N (c). 
\]
\end{theorem}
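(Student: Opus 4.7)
The plan is a standard density-plus-contraction argument. The algebraic map $E_Z : C(V) \to C(Z^\perp)$ was constructed so that for each $c \in C(V)$ the net $(E_N(c) \mid N \in \F(Z))$ is eventually constant with value $E_Z(c)$; and $\E_Z$ was defined as the continuous extension of $E_Z$, while $\E_N$ was defined as the continuous extension of $E_N$. Thus on the dense subalgebra $C(V) \subseteq C[V]$ the desired convergence is actually trivial: the net is eventually constant and equal to $\E_Z(c)$. The whole content of the theorem is to pass from the dense subspace to $C[V]$ using the uniform contractivity of the $\E_N$ and $\E_Z$.

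Concretely, I would fix $c \in C[V]$ and $\varepsilon > 0$, then choose $c_0 \in C(V)$ with $\| c - c_0 \| < \varepsilon / 3$. Choose $M \in \F(V)$ so that $c_0 \in C(M)$ and let $X \in \F(Z)$ be the orthogonal projection of $M$ on $Z$. For any $N \in \F(Z)$ with $N \supseteq X$ the earlier analysis gives $E_N(c_0) = E_X(c_0) = E_Z(c_0) = \E_Z(c_0)$. Splitting in the usual way,
\[
\| \E_N(c) - \E_Z(c) \| \leqslant \| \E_N(c - c_0) \| + \| \E_N(c_0) - \E_Z(c_0) \| + \| \E_Z(c_0 - c) \|,
\]
and using that $\E_N$ and $\E_Z$ are contractions, the first and third terms are each at most $\varepsilon / 3$, while the middle term vanishes for $N \supseteq X$. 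This shows $\E_N(c) \to \E_Z(c)$ along $\F(Z)$.

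I do not see any genuine obstacle; the only thing to be careful about is the compatibility between the algebraic and the $\mathrm{C}^*$ objects, namely that the continuous extension $\E_N$ of $E_N$ really does agree with $E_N$ on $C(V)$ (immediate from continuity) and that $\E_Z$ restricted to $C(V)$ coincides with $E_Z$ (immediate from the definition of $\E_Z$ as continuous extension). Once this is acknowledged, everything reduces to the contractive $\varepsilon/3$-estimate above.
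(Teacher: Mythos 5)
Your proposal is correct and is essentially identical to the paper's own proof: both approximate $c$ by some $c_0 \in C(V)$, use that $E_N(c_0)$ is eventually constant and equal to $E_Z(c_0) = \E_Z(c_0)$ once $N$ contains the projection of a suitable $M$ onto $Z$, and then conclude by contractivity of $\E_N$ and $\E_Z$ together with the triangle inequality. The only difference is cosmetic ($\varepsilon/3$ versus the paper's $2\varepsilon$ bound).
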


\begin{proof} 
Let $\varepsilon > 0$. Choose $c_{\varepsilon} \in C(V)$ so that $\| c - c_{\varepsilon} \| \leqslant \varepsilon$. Choose $M_{\varepsilon} \in \F (V)$ so that $c_{\varepsilon} \in C(M_{\varepsilon})$ and let $X_{\varepsilon} \in \F (Z)$ be the orthogonal projection of $M_{\varepsilon}$ on $Z$. Now, let $N \in \F (Z)$ contain $X_{\varepsilon}$: as $\E_Z$ and $\E_N$ are contractions, $\| \E_Z (c) - \E_Z (c_{\varepsilon}) \| \leqslant \varepsilon$ and $\| \E_N (c_{\varepsilon}) - \E_N (c) \| \leqslant \varepsilon$; also, $\E_Z (c_{\varepsilon}) = E_Z(c_{\varepsilon}) = E_N (c_{\varepsilon}) = \E_N(c_{\varepsilon})$. According to the triangle inequality, it follows that if $\F (Z) \ni N \supseteq X_{\varepsilon}$ then $\| \E_Z (c) - \E_N (c) \| \leqslant 2 \varepsilon$ and the proof is complete. 
\end{proof}

We are now able to establish Theorem 1 in full. 

\medbreak 
\noindent 
{\it Proof of Theorem 1}. As noted previously, the Clifford relations yield $C[Z^{\perp}] \subseteq C[Z]'$. We need only prove the reverse inclusion, so let $c \in C[Z]'$. Recall the formulae displayed prior to Theorem 3: if $u \in Z$ is a unit vector, then $\E_u (c) = c$; taking the product as $u$ runs over an orthonormal basis, it follows that if $N \in \F (Z)$ then $\E_N (c) = c$. Finally, Theorem 3 allows us to pass to the limit as $N$ runs over $\F (Z)$ to conclude that $c = \E_Z (c) \in C[Z^{\perp}]$.

\qed

We announced that $\E_Z : C[V] \ra C[Z^{\perp}]$ would be a conditional expectation; indeed it is. Let $c \in C[V]$ and let $\ell, r \in C[Z^{\perp}] = C[Z]'$; choose sequences $C(V) \ni c_n \ra c$, $C(Z^{\perp}) \ni \ell_n \ra \ell$ and $C(Z^{\perp}) \ni r_n \ra r$. The conditional expectation property of $E_Z$ in Theorem 2 justifies the middle step in 
\[ \E_Z (\ell_n c_n r_n) = E_Z (\ell_n c_n r_n) = \ell_n E_Z (c_n) r_n = \ell_n \E_Z (c_n) r_n 
\]
whence continuity of $\E_Z$ and passage to the $n \ra \infty$ limit yield the required identity
\[ \E_Z( \ell c r) = \ell \: \E_Z (c) \:  r.
\]
Further, $\E_Z$ preserves the involution $^*$ and in fact preserves positivity. Let $c \in C(V)$: if $u \in V$ is a unit vector then 
\[ E_u (c^* c) = \frac{1}{2} (c^* c +  (\gamma(c) u)^*  (\gamma(c) u) )
\]
is a convex combination of terms $d^* d$ for $d \in C(V)$ so the same is true of $E_N (c^* c)$ whenever $N \in \F (Z)$ and therefore true of $E_Z (c^* c)$; by continuity, it follows that $\E_Z (c^* c) \geqslant 0$ whenever $c \in C[V]$. Of course, $\E_Z$ is idempotent. 

\medbreak 

We close by offering what is perhaps an eccentric application of `twisted duality'. 

\begin{theorem} 
If $\{ Z_{\lambda} | \lambda \in \Lambda \}$ is any family of (closed) subspaces of $V$ then 
\[ \bigcap_{\lambda \in \Lambda} C[Z_{\lambda}] = C[ \bigcap_{\lambda \in \Lambda} Z_{\lambda}]. 
\]
\end{theorem}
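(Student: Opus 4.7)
One inclusion is immediate: since $\bigcap_\lambda Z_\lambda \subseteq Z_\mu$ for each $\mu \in \Lambda$, we have $C[\bigcap_\lambda Z_\lambda] \subseteq C[Z_\mu]$ and therefore $C[\bigcap_\lambda Z_\lambda] \subseteq \bigcap_\lambda C[Z_\lambda]$. The plan for the reverse inclusion is to invoke Theorem 1 twice, first converting the intersection of Clifford subalgebras into a single supercommutant, and then converting that supercommutant back into the Clifford algebra of the desired subspace.

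Explicitly, I would first apply Theorem 1 with $Z_\lambda^\perp$ in place of $Z$ (using $(Z_\lambda^\perp)^\perp = Z_\lambda$) to rewrite each $C[Z_\lambda]$ as $C[Z_\lambda^\perp]'$. In view of the characterization of the supercommutant displayed just before Theorem 1, this yields
\[
\bigcap_\lambda C[Z_\lambda] \;=\; \bigcap_\lambda C[Z_\lambda^\perp]' \;=\; \{\, a \in C[V] : (\forall \lambda)(\forall z \in Z_\lambda^\perp)\; z a = \gamma(a) z \,\}.
\]
Setting $W = \overline{\sum_\lambda Z_\lambda^\perp}$ for the closed real linear span of the subspaces $Z_\lambda^\perp$ in $V$, the displayed set coincides with $C[W]'$, whence a second application of Theorem 1 identifies it with $C[W^\perp]$. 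The standard Hilbert space identity
\[
W^\perp \;=\; \bigcap_\lambda (Z_\lambda^\perp)^\perp \;=\; \bigcap_\lambda Z_\lambda
\]
then completes the chain.

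The one step that deserves scrutiny is the identification of the displayed set with $C[W]'$: I need to show that if $a \in C[V]$ satisfies $z a = \gamma(a) z$ for every $z \in \bigcup_\lambda Z_\lambda^\perp$, then the same relation holds for every $z \in W$. This follows from the observation that the map $V \ra C[V] : z \mapsto z a - \gamma(a) z$ is real-linear and norm-continuous (using that $\gamma$ is isometric on $C[V]$ and that left and right multiplication are continuous), so its kernel is a closed real linear subspace of $V$; containing each $Z_\lambda^\perp$, it must contain $W$. This is the only analytic input beyond twisted duality itself, and once it is in place the rest is routine bookkeeping.
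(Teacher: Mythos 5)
Your proposal is correct and follows essentially the same route as the paper: both pass to the supercommutant of $\overline{\sum_\lambda Z_\lambda^\perp}$ via the characterization $za = \gamma(a)z$, extend from the union to the closed span by linearity and continuity, and then apply twisted duality together with the Hilbert space identity $(\overline{\sum_\lambda Z_\lambda^\perp})^\perp = \bigcap_\lambda Z_\lambda$. Your explicit justification of the continuity step (the kernel of $z \mapsto za - \gamma(a)z$ being a closed real subspace) is a welcome elaboration of what the paper leaves implicit.
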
 

\begin{proof} 
Only $\subseteq$ is in question, so let $c \in \bigcap_{\lambda \in \Lambda} C[Z_{\lambda}]$. If $\lambda \in \Lambda$ then $c \in  C[Z_{\lambda}] \subseteq C[Z_{\lambda}^{\perp}]'$ by (easier direction) `twisted duality' so that if also $v \in Z_{\lambda}^{\perp}$ then $v c = \gamma(c) v$. By linearity, it follows that 
\[ v \in \sum_{\lambda \in \Lambda} Z_{\lambda}^{\perp} \; \Rightarrow \; v c = \gamma(c) v
\]
whence continuity yields 
\[ v \in \overline{\sum_{\lambda \in \Lambda} Z_{\lambda}^{\perp}}  \; \Rightarrow \; v c = \gamma(c) v. 
\]
Recalling the Hilbert space identity 
\[ \overline{\sum_{\lambda \in \Lambda} Z_{\lambda}^{\perp}} = (\bigcap_{\lambda \in \Lambda} Z_{\lambda})^{\perp}
\]
we conclude from (harder direction) `twisted duality' that
\[ c \in C[\overline{\sum_{\lambda \in \Lambda} Z_{\lambda}^{\perp}}]' \subseteq C[(\overline{\sum_{\lambda \in \Lambda} Z_{\lambda}^{\perp}})^{\perp}] = C[\bigcap_{\lambda \in \Lambda} Z_{\lambda}].
\]
\end{proof}

\bigbreak 
\noindent 
\begin{center}
REFERENCES
\end{center}
\medbreak 
\noindent
[1] H. Baumgartel, M. Jurke and F. Lledo, {\it Twisted duality of the CAR-Algebra}, J. Math. Phys. {\bf 43 (8)} (2002) 4158-4179. 
\medbreak 
\noindent 
[2] S. Doplicher, R. Haag and J. Roberts, {\it Fields, observables and gauge transformations. I}, Comm. Math. Phys. {\bf 13} (1969) 1-23. 
\medbreak 
\noindent
[3] J. J. Foit, {\it Abstract Twisted Duality for Quantum Free Fermi Fields}, Publ. RIMS, Kyoto Univ. {\bf 19} (1983) 729-741. 
\medbreak 
\noindent 
[4] R. J. Plymen and P. L. Robinson, {\it Spinors in Hilbert Space}, Cambridge Tracts in Mathematics {\bf 114} (1994). 
\medbreak 
\noindent 
[5] P. L. Robinson, {\it `Twisted Duality' for Clifford Algebras}, arXiv:1407.1420v1 (2014). 
\medbreak 
\noindent 
[6] S.J. Summers, {\it Normal Product States for Fermions and Twisted Duality for CCR- and CAR-Type Algebras with Application to the Yukawa$_2$ Quantum Field Model}, Comm. Math. Phys. {\bf 86} (1982) 111-141. 

\end{document}